\documentclass{amsart}

\usepackage{amsmath, amsfonts, mathrsfs}
\usepackage{amssymb,latexsym}
\usepackage{enumerate}
\usepackage{bbm} 
\usepackage{cite}

\usepackage[top=3.1cm, bottom=3.1cm, left=3.1cm, right=3.1cm, includefoot, heightrounded]{geometry}

\newtheorem{theorem}{Theorem}[section]
\newtheorem{corollary}[theorem]{Corollary}
\newtheorem{lemma}[theorem]{Lemma}

\theoremstyle{definition}

\numberwithin{equation}{section}

\begin{document}

\title[An integral Suzuki-type fixed point theorem]
{An integral Suzuki-type fixed point theorem \\
with application}

\author{ Sokol Bush Kaliaj }

\address{
Mathematics Department, 
Science Natural Faculty, 
University of Elbasan,
Elbasan, 
Albania.
}

\email{sokolkaliaj@yahoo.com}

\subjclass[2010]{Primary 47H10, 54E50; Secondary 28A10}

\keywords{Complete metric space, integral Suzuki-type fixed point theorem, multivalued mappings, functional
equation, dynamic programming}

\begin{abstract} 
In this paper, we present an integral Suzuki-type fixed point theorem for multivalued  
mappings defined on a complete metric space in terms of  the \'{C}iri\'{c} integral contractions. 
As an application, we will prove an existence and uniqueness theorem for a functional equation arising in dynamic programming of continuous multistage decision processes.
\end{abstract}

\maketitle

\section{Introduction and Preliminaries}

The Banach contraction principle \cite{BAN} is a very famous theorem in nonlinear analysis and
has many useful applications and generalizations. 
Over the years, it has been generalized in different directions and spaces 
by several mathematicians. 
In 2008 Suzuki \cite{SUZ1} introduced a new type of mappings which generalize the well-known
Banach contraction principle. 
The same has been extended in various ways, 
see e.g. papers \cite{KIK1}-\cite{KIK4},  \cite{POP}, \cite{KAL1}, \cite{KAL2} and \cite{SUZ2}. 
Using the idea of the Kikkawa-Suzuki fixed point theorem \cite{KIK1}, 
Dori\'{c} and Lazovi\'{c} (\cite{DOR}, Theorem 2.1) 
have proved the following theorem in terms of generalized
multivalued contractions considered in \cite{CIR1}, Definition 3.  
 
\begin{theorem}\label{DOR}
Let $(X, d)$ be a complete metric space and let $T : X \to CB(X)$ be a multivalued
mapping. 
Assume that $T$ is $(r, \phi)$-Suzuki contraction with
\begin{equation}\label{eqDOR}
\phi(r)=  
\left \{
\begin{array}{ll}
1 & \text{if } 0 \leq r <\frac{1}{2} \\
1-r & \text{if } \frac{1}{2} \leq r <1 \\
\end{array} 
\right.
\end{equation}
i.e., there exists $r \in [0, 1)$ such that the implication
\begin{equation}\label{implDOR}
            \phi(r) d(x, T x) \leq d(x, y)  \Rightarrow H(T x, T y) \leq r T_{M}(x, y),
\end{equation}
holds whenever $x, y \in X$, 
where
$$
T_{M}(x, y) = 
\max
\left \{
d(x, y), d(x, Tx), d(y, Ty), \frac{d(x, Ty) + d(y, Tx)}{2}
\right \}.
$$
Then, $T$ has a fi』ed point.
\end{theorem}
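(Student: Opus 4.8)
The plan is to produce a fixed point as the limit of an orbit of $T$, in the spirit of Nadler's theorem, and then to verify the fixed point property by a single passage to the limit in the contraction \eqref{implDOR}. Fix $q$ with $r < q < 1$ (the case $r = 0$ being trivial). Starting from an arbitrary $x_0 \in X$ and any $x_1 \in Tx_0$, I would build inductively a sequence $\{x_n\}$ with $x_{n+1} \in Tx_n$. The crucial observation that unlocks the Suzuki hypothesis is that $\phi(r) \le 1$, so that
$$
\phi(r)\, d(x_n, Tx_n) \le d(x_n, Tx_n) \le d(x_n, x_{n+1}),
$$
i.e. the premise of \eqref{implDOR} is automatically satisfied for the pair $(x_n, x_{n+1})$, giving $H(Tx_n, Tx_{n+1}) \le r\, T_M(x_n, x_{n+1})$.

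Next I would simplify $T_M(x_n, x_{n+1})$. Since $x_{n+1} \in Tx_n$ we have $d(x_{n+1}, Tx_n) = 0$ and $d(x_n, Tx_n) \le d(x_n, x_{n+1})$, while $d(x_{n+1}, Tx_{n+1}) \le H(Tx_n, Tx_{n+1})$ and the mixed term is controlled by the triangle inequality. This yields $T_M(x_n, x_{n+1}) \le \max\{d(x_n, x_{n+1}), H(Tx_n, Tx_{n+1})\}$; feeding this back and using $r < 1$ forces $H(Tx_n, Tx_{n+1}) \le r\, d(x_n, x_{n+1})$. Then $d(x_{n+1}, Tx_{n+1}) \le H(Tx_n, Tx_{n+1}) < q\, d(x_n, x_{n+1})$, so by the definition of the distance to the nonempty set $Tx_{n+1}$ one may choose $x_{n+2} \in Tx_{n+1}$ with $d(x_{n+1}, x_{n+2}) \le q\, d(x_n, x_{n+1})$. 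Iterating gives $d(x_n, x_{n+1}) \le q^n d(x_0, x_1)$, so $\{x_n\}$ is Cauchy; by completeness $x_n \to z$ for some $z \in X$, and moreover $d(x_n, Tx_n) \le d(x_n, x_{n+1}) \to 0$.

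It remains to prove $z \in Tz$. The strategy is to apply \eqref{implDOR} to the pair $(x_n, z)$: once the premise $\phi(r)\, d(x_n, Tx_n) \le d(x_n, z)$ is known to hold for infinitely many $n$, one obtains $H(Tx_n, Tz) \le r\, T_M(x_n, z)$ along those $n$. Using $x_n \to z$, $d(x_n, Tx_n) \to 0$, $d(z, Tx_n) \le d(z, x_{n+1}) \to 0$ and the $1$-Lipschitz continuity of $w \mapsto d(w, Tz)$, one checks that $T_M(x_n, z) \to d(z, Tz)$. Since $x_{n+1} \in Tx_n$ gives $d(x_{n+1}, Tz) \le H(Tx_n, Tz)$, the estimate $d(z, Tz) \le d(z, x_{n+1}) + H(Tx_n, Tz)$ yields in the limit $d(z, Tz) \le r\, d(z, Tz)$, whence $d(z, Tz) = 0$ and $z \in Tz$ because $Tz$ is closed.

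The main obstacle is precisely the verification that the premise $\phi(r)\, d(x_n, Tx_n) \le d(x_n, z)$ holds for infinitely many $n$, and this is where the two-branch definition \eqref{eqDOR} of $\phi$ enters. I would prove that two consecutive indices cannot both fail: if $\phi(r)\, d(x_n, Tx_n) > d(x_n, z)$ and $\phi(r)\, d(x_{n+1}, Tx_{n+1}) > d(x_{n+1}, z)$, then, using $d(x_n, Tx_n) \le d(x_n, x_{n+1})$ and $d(x_{n+1}, Tx_{n+1}) \le d(x_{n+1}, x_{n+2}) \le d(x_n, x_{n+1})$,
$$
d(x_n, x_{n+1}) \le d(x_n, z) + d(x_{n+1}, z) < \phi(r)\bigl(d(x_n, Tx_n) + d(x_{n+1}, Tx_{n+1})\bigr) \le 2\,\phi(r)\, d(x_n, x_{n+1}),
$$
a contradiction as soon as $\phi(r) \le \tfrac12$, that is, for $r \ge \tfrac12$ where $\phi(r) = 1 - r$. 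This forces the premise for infinitely many $n$ and settles the range $r \ge \tfrac12$. The delicate point is the remaining range $0 \le r < \tfrac12$, where $\phi(r) = 1$ and the estimate above is inconclusive; here I expect to need a separate, sharper argument exploiting the smallness of $r$ (for instance, bounding $d(x_n, z)$ from below via the geometric tail estimate to force the premise for large $n$). I anticipate this small-$r$ regime to be the principal technical hurdle of the proof.
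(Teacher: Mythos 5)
Your construction of the orbit, the Cauchy estimate, and the passage to the limit $T_M(x_n,z)\to d(z,Tz)$ are all sound, and your treatment of the range $\tfrac12\le r<1$ (via the observation that two consecutive indices cannot both violate the premise when $2\phi(r)\le 1$) is a correct, if slightly different, route to the same conclusion the paper reaches by showing the premise $\phi(r)d(x,Tx)\le d(x,z)$ holds for \emph{every} $x\ne z$. However, there is a genuine gap in the range $0\le r<\tfrac12$, which you acknowledge but do not close, and the direction you suggest for closing it does not work. The geometric tail estimate gives only $d(x_n,z)\ge \frac{1-2q}{1-q}\,d(x_n,x_{n+1})$, and since $\frac{1-2q}{1-q}<1$ (indeed it is nonpositive for $q\ge\tfrac12$), this can never dominate $d(x_n,Tx_n)$, which is exactly what the premise with $\phi(r)=1$ would require; so the premise for the pairs $(x_n,z)$ genuinely may fail for all large $n$ in this regime, and no lower bound on $d(x_n,z)$ along the orbit will rescue it.

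The missing idea — the one the paper uses (in its case (i), specialized to $\psi(t)=t$) and the reason the threshold $\tfrac12$ appears at all — is to stop testing the contraction against points of the orbit and instead test it against points $w\in Tz$. First one establishes two auxiliary facts: (a) for any $x$ and any $y\in Tx$ one has $H(Tx,Ty)\le r\,d(x,y)$ (your own orbit estimate, stated for arbitrary $x$; this is the paper's Lemma~\ref{L2.1}); and (b) for the limit $z$ of the orbit and any $x\ne z$, $d(z,Tx)\le r\max\{d(z,x),d(x,Tx)\}$, obtained by applying the implication to the pairs $(z_n,x)$ for $n$ large, since $d(z_n,Tz_n)\le d(z_n,x)$ once $d(z_n,z)<\tfrac13 d(z,x)$ (the paper's Lemma~\ref{L2.2}). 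Then for an arbitrary $w\in Tz$ (with $w\ne z$, else you are done) the triangle inequality through $Tw$ gives
\begin{equation*}
d(z,Tz)\le d(z,Tw)+H(Tw,Tz)\le r\max\{d(z,w),d(w,Tw)\}+r\,d(z,w)\le 2r\,d(z,w),
\end{equation*}
using $d(w,Tw)\le H(Tz,Tw)\le r\,d(z,w)<d(z,w)$. Taking the infimum over $w\in Tz$ yields $d(z,Tz)\le 2r\,d(z,Tz)$, and it is precisely the hypothesis $r<\tfrac12$ that makes $2r<1$ and forces $d(z,Tz)=0$. Without this (or an equivalent) argument your proof covers only half the parameter range, so as it stands the proposal is incomplete.
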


In this paper, we
first define new Suzuki-type contractions in terms of  the \'{C}iri\'{c} integral contractions, 
$(r, \phi, \psi)$-Suzuki integral contractions.  
Then, a fixed point theorem for 
multivalued mappings $T : X \to CB(X)$ of $(r, \phi, \psi)$-Suzuki integral type contractions is proved. 
The main result here generalizes Theorem \ref{DOR}. 
In addition, using our
result, we proved the existence and uniqueness of solutions for a functional equation arising
in dynamic programming of continuous multistage decision processes, see Theorem \ref{T3.1}. 
Similar theorems are presented in the papers  \cite{BHAK}, \cite{KAL}, 
\cite{LIU1}, \cite{LIU2} and \cite{JAS}.

Throughout this paper, $(X, d)$ is a metric space and $\mathbb{R}^{+}$ is the set of all non-negative real
numbers, i.e., $\mathbb{R}^{+} = [0, +\infty)$.  
A mapping $\psi : \mathbb{R}^{+} \to \mathbb{R}^{+}$  is said to be \textit{subadditive}, 
if the inequality $\psi(t'+t'') \leq \psi(t')  + \psi(t'')$  
holds whenever $t',t'' \in \mathbb{R}^{+}$. 
We now set
\begin{equation*}
\begin{split} 
\Psi = \{ \psi : \mathbb{R}^{+} \to \mathbb{R}^{+} 
:& \psi \text{ is subadditive, non-decreasing and continuous on }\mathbb{R}^{+}, \\
&\psi(t) \geq t \text{ for all } t>0,\\
&\psi^{-1}(0)  = \{0\} \}.
\end{split}
\end{equation*}
The family $\Psi$ is not empty, see \cite{KAL1}.
For any $\psi \in \Psi$  and $[a, b]  \subset  \mathbb{R}^{+}$, 
by Theorem VIII.2.4, p.211, \cite{NAT}, $\psi'(t)$ exists at almost $t \in [a, b]$. 
Further, by Theorem VIII.2.5, p.212, \cite{NAT}, $\psi'$ is summable on $[a, b]$ and
$$
\int_{a}^{b} \psi'(t)dt \leq \psi(b) - \psi(a).
$$

By $CB(X)$ the family of all nonempty closed bounded subsets of $X$ is denoted. 
Let $H(\cdot, \cdot)$ be the Hausdorff metric, i.e.,
$$
H(A, B) = 
\max  
\left \{ 
\sup_{x \in A}  d(x, B), \sup_{y \in B} d(y, A) 
\right \}, 
\text{ for all } A, B \in  CB(X),
$$
where
$$
d(x, B) = \inf \{ d(x, y) : y \in B \}.
$$
It is well-known that if $(X,d)$ is a complete metric space, then 
$(CB(X), H)$ is also a complete metric space.  
We say that a multivalued mapping 
$T : X \to CB(X)$ is an \textit{$(r, \phi, \psi)$-Suzuki integral contraction},  
if $\phi$ defined by \eqref{eqDOR} 
and there exist $r \in  [0, 1)$ and $\psi \in \Psi$ such that the implication
\begin{equation}\label{implKOL}
\phi(r) \int_{0}^{d(x,T x)}\psi'(t)dt 
\leq \psi(d(x,y))
\Rightarrow 
\psi(H(Tx, Ty)) \leq r T_{\int}(x,y)
\end{equation} 
holds whenever $x, y \in X$, where
\begin{equation*}                                          
T_{\int}(x, y) = \max 
\left \{
\int_{0}^{d(x, y)} \psi'(t)dt, \int_{0}^{d(x, T x)} \psi'(t)dt, 
\int_{0}^{d(y, T y)} \psi'(t)dt, 
\int_{0}^{\frac{d(x, T y) + d(y, T x)}{2}} \psi'(t)dt           
\right \};
\end{equation*}
$T : X \to CB(X)$ is said to be \textit{$(r, \phi, \psi)$-Suzuki contraction},  
if $\phi$ defined by \eqref{eqDOR} 
and there exist $r \in  [0, 1)$ and $\psi \in \Psi$ such that the implication
\begin{equation}\label{implKOL1}
\phi(r) \psi(d(x,T x)) 
\leq \psi(d(x,y))
\Rightarrow 
\psi(H(Tx, Ty)) \leq r T_{\psi}(x,y)
\end{equation}
holds whenever $x, y \in X$, where
\begin{equation*}                                          
T_{\psi}(x, y) = \max 
\left \{
\psi(d(x, y)), \psi(d(x, T x)), 
\psi(d(y, T y)), 
\psi \left ( \frac{d(x, T y) + d(y, T x)}{2} \right )          
\right \}. 
\end{equation*}
In the special case when  
$\psi \in \Psi$ is absolutely continuous on $\mathbb{R}^{+}$ 
(i.e. $\psi$ is absolutely continuous over every interval $[0,r], (r>0)$), 
we obtain by Corollary of \cite[Theorem IX.6.2, p.264]{NAT} 
that $T$ is $(r, \phi, \psi)$-Suzuki integral contraction type 
if and only is 
$T$ is $(r, \phi, \psi)$-Suzuki contraction type. 
Note that if $\psi(t) = t$, for all $t > 0$, then 
$(r, \phi, \psi)$-Suzuki contractions coincides with 
$(r, \phi)$-Suzuki contractions.

We say that $T$  is \textit{$(r,\psi)$-\'{C}iri\'{c} integral contraction}, 
if there exist $r \in  [0, 1)$ and $\psi \in \Psi$ such that the inequality 
\begin{equation*}
\psi(H(Tx, Ty)) \leq r T_{\int}(x,y) 
\end{equation*}
holds whenever $x, y \in X$. 
Clearly, if $T$ is $(r,\psi)$-\'{C}iri\'{c} integral contraction, 
then $T$ is $(r, \phi, \psi)$-Suzuki integral contraction.
The multivalued mapping $T$ is said to have a fi』ed point, 
if there exists $z \in X$ such that $z \in T z$.

We say that a single valued mapping $S:X \to X$ 
is an \textit{$(r, \phi, \psi)$-Suzuki integral contraction}, 
if the multivalued mapping $T: X \to CB(X)$ defined as follows
$$
Tx = \{ Sx \}, \text{ for all } x \in X,
$$ 
is an $(r, \phi, \psi)$-Suzuki integral contraction. 
We say that $S$  is \textit{$(r,\psi)$-\'{C}iri\'{c} integral contraction}, 
if $T$  is $(r,\psi)$-\'{C}iri\'{c} integral contraction.
The single valued mapping $S$ is said to have a fixed point, 
if there exists $z \in X$ such that $z = S z$.

\section{The main result}

The main result is Theorem \ref{T2.1}. 
Let us start with the following  auxiliary lemma.

\begin{lemma}\label{L2.1}
Let $(X,d)$ be a complete metric space and 
let $T:X \to CB(X)$ be a multivalued mapping. 
Assume that $T$ is an $(r, \phi, \psi)$-Suzuki integral contraction. 
Then, for any $x \in X$, we have
\begin{equation}\label{eqL21}
\psi( H(Tx, Ty)) \leq 
r \int_{0}^{d(x, y)} \psi'(t)dt,
\text{ for all }y \in Tx.
\end{equation}
\end{lemma}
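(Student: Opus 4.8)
The plan is to feed the pair $(x,y)$ with $y\in Tx$ into the defining implication \eqref{implKOL}: first I would check that its premise is automatically satisfied, and then I would argue that the quantity $T_{\int}(x,y)$ appearing in its conclusion collapses to $\int_0^{d(x,y)}\psi'(t)\,dt$. Three facts from the Preliminaries will be used repeatedly: since $\psi$ is non-decreasing, $\psi'\ge 0$ almost everywhere, so $s\mapsto\int_0^s\psi'(t)\,dt$ is non-decreasing; since $\psi^{-1}(0)=\{0\}$ we have $\psi(0)=0$, whence $\int_0^b\psi'(t)\,dt\le\psi(b)-\psi(0)=\psi(b)$ for every $b\ge 0$; and $\phi(r)\le 1$ by \eqref{eqDOR}.

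To verify the premise, I would use that $y\in Tx$ forces $d(x,Tx)\le d(x,y)$, so that, chaining $\phi(r)\le 1$, monotonicity of the integral, and $\int_0^b\psi'\le\psi(b)$,
\begin{equation*}
\phi(r)\int_0^{d(x,Tx)}\psi'(t)\,dt \le \int_0^{d(x,y)}\psi'(t)\,dt \le \psi(d(x,y)).
\end{equation*}
Hence the premise of \eqref{implKOL} holds and I obtain $\psi(H(Tx,Ty))\le r\,T_{\int}(x,y)$.

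It then remains to bound $T_{\int}(x,y)$. Writing $h=H(Tx,Ty)$ and $I=\int_0^{d(x,y)}\psi'(t)\,dt$, I would record the consequences of $y\in Tx$: namely $d(y,Tx)=0$, $d(y,Ty)\le h$, and (by the triangle inequality) $d(x,Ty)\le d(x,y)+d(y,Ty)\le d(x,y)+h$, so that the fourth argument of $T_{\int}(x,y)$ is at most $\frac{d(x,y)+h}{2}\le\max\{d(x,y),h\}$. Estimating each of the four integrals by monotonicity then gives $T_{\int}(x,y)\le\max\{I,\int_0^h\psi'(t)\,dt\}\le\max\{I,\psi(h)\}$.

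The crux, and the step I expect to be the main obstacle, is that the third and fourth terms of $T_{\int}(x,y)$ involve $Ty$, hence $h$, the very quantity to be bounded, so there is an apparent circularity. I would break it using $r<1$: combining the last estimate with $\psi(h)\le r\,T_{\int}(x,y)$ yields $\psi(h)\le r\max\{I,\psi(h)\}$. If the maximum were $\psi(h)$, then $r<1$ would force $\psi(h)=0$, hence $h=0$ since $\psi^{-1}(0)=\{0\}$, and then $\int_0^h\psi'(t)\,dt=0\le I$; otherwise $\psi(h)\le I$ outright, so again $\int_0^h\psi'(t)\,dt\le\psi(h)\le I$. In either case $T_{\int}(x,y)\le I$, and substituting this back into $\psi(H(Tx,Ty))\le r\,T_{\int}(x,y)$ delivers \eqref{eqL21}.
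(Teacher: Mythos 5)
Your proof is correct and follows essentially the same route as the paper's: verify the premise of \eqref{implKOL} via $d(x,Tx)\le d(x,y)$ together with $\int_0^b\psi'(t)\,dt\le\psi(b)$, collapse $T_{\int}(x,y)$ using $d(y,Tx)=0$ and the triangle inequality, and break the apparent circularity with $r<1$. The only cosmetic difference is that you track $H(Tx,Ty)$ itself as the self-referential quantity, whereas the paper tracks $d(y,Ty)\le H(Tx,Ty)$; the mechanism is identical.
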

\begin{proof}
Fix $x \in X$ and choose an arbitrary $y \in Tx$. 
Then, 
$$d(x,Tx) \leq d(x,y),$$ 
and since $0 < \phi(r) \leq 1$, we obtain
\begin{equation}\label{eqL21.1}
\begin{split} 
\phi(r) \int_{0}^{d(x,Tx)} \psi'(t)dt \leq \int_{0}^{d(x,Tx)} \psi'(t)dt 
\leq \int_{0}^{d(x,y)} \psi'(t)dt.  
\end{split}
\end{equation}
By Theorem VIII.2.5 in \cite{NAT}, we have also
\begin{equation*}
\begin{split} 
\int_{0}^{d(x,y)} \psi'(t)dt \leq \psi(d(x,y)).  
\end{split}
\end{equation*}
The last inequality together with \eqref{eqL21.1} yields
\begin{equation*}
\begin{split} 
\phi(r) \int_{0}^{d(x,Tx)} \psi'(t)dt \leq \psi(d(x,y)).
\end{split}
\end{equation*}
Hence, by hypothesis, we obtain
\begin{equation*} 
\begin{split} 
\psi( H(Tx, Ty)) 
\leq r T_{\int}(x, y) =
r \max \{
&\int_{0}^{d(x, y)} \psi'(t)dt,\\ 
&\int_{0}^{d(x, T x)} \psi'(t)dt, \int_{0}^{d(y, T y)} \psi'(t)dt, \\
&\int_{0}^{\frac{d(x, T y) +0}{2}} \psi'(t)dt \},
\end{split}
\end{equation*}
and since
$ 
\int_{0}^{d(x,Tx)} \psi'(t)dt \leq \int_{0}^{d(x,y)} \psi'(t)dt  
$ 
and
\begin{equation*}
\begin{split} 
\frac{d(x,Ty)}{2} \leq \frac{d(x,y)}{2} + \frac{d(y, Ty)}{2} 
\leq \max \{ d(x,y), d(y, Ty) \}
\end{split}
\end{equation*}
it follows that
\begin{equation}\label{eqL21.2}
\begin{split} 
\psi( H(Tx, Ty)) 
\leq&
r \max 
\left \{
\int_{0}^{d(x, y)} \psi'(t)dt, \int_{0}^{d(y, T y)} \psi'(t)dt, 
\int_{0}^{\max \{ d(x,y), d(y, Ty) \}} \psi'(t)dt \right \}  \\
=&
r \max 
\left \{
\int_{0}^{d(x, y)} \psi'(t)dt, \int_{0}^{d(y, T y)} \psi'(t)dt          
\right \}.
\end{split}
\end{equation}
Hence, by inequality
\begin{equation*}
\begin{split} 
\psi( d(y, Ty)) \leq \psi( H(Tx, Ty))
\end{split}
\end{equation*}
we obtain 
\begin{equation*}
\begin{split} 
\psi( d(y, Ty)) \leq 
r \max 
\left \{
\int_{0}^{d(x, y)} \psi'(t)dt, \int_{0}^{d(y, T y)} \psi'(t)dt
\right \}.
\end{split}
\end{equation*} 
By Theorem VIII.2.5 in \cite{NAT}, we also have 
\begin{equation*}
\begin{split} 
\int_{0}^{d(y, T y)} \psi'(t)dt \leq  
r \max 
\left \{
\int_{0}^{d(x, y)} \psi'(t)dt, \int_{0}^{d(y, T y)} \psi'(t)dt
\right \},
\end{split}
\end{equation*}
and since $r \in [0,1)$, 
it follows that 
\begin{equation*}
\begin{split} 
\max 
\left \{
\int_{0}^{d(x, y)} \psi'(t)dt, \int_{0}^{d(y, T y)} \psi'(t)dt
\right \} = \int_{0}^{d(x, y)} \psi'(t)dt.
\end{split}
\end{equation*}
The last result together with \eqref{eqL21.2} implies
\begin{equation*}
\begin{split} 
\psi( H(Tx, Ty))  \leq r \int_{0}^{d(x, y)} \psi'(t)dt.
\end{split}
\end{equation*}
Since $y$ was arbitrary, the last result means that 
\eqref{eqL21} holds and this ends the proof.
\end{proof}

\begin{lemma}\label{L2.2}
Let $(X,d)$ be a complete metric space and 
let $T:X \to CB(X)$ be an  $(r, \phi, \psi)$-Suzuki integral contraction.
Assume that a sequence $(z_{n}) \subset X$ converges to a point $z \in X$ 
and $z_{n+1} \in Tz_{n}$ for all $n \in \mathbb{N}$. 
Then, 
\begin{equation}\label{eqL22}
\psi(d(z, Tx)) 
\leq
r \max
\left \{
\int_{0}^{d(z, x)} \psi'(t) dt, \int_{0}^{d(x, Tx)} \psi'(t) dt
\right \}, 
\text{ for all }x \in X \setminus \{z\}.
\end{equation}
\end{lemma}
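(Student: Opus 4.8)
The plan is to instantiate the defining implication \eqref{implKOL} at the pair $(z_n, x)$ for all sufficiently large $n$, and then let $n \to \infty$. So first I would fix $x \in X \setminus \{z\}$, so that $d(z,x) > 0$. Since $z_{n+1} \in Tz_n$, we have $d(z_n, Tz_n) \le d(z_n, z_{n+1}) \le d(z_n, z) + d(z, z_{n+1}) \to 0$, whence, using $\int_0^s \psi'(t)\,dt \le \psi(s)$, the continuity of $\psi$, and $\psi(0)=0$,
$$
\phi(r)\int_0^{d(z_n, Tz_n)}\psi'(t)\,dt \le \int_0^{d(z_n, Tz_n)}\psi'(t)\,dt \le \psi(d(z_n, Tz_n)) \longrightarrow 0.
$$
On the other hand $\psi(d(z_n, x)) \to \psi(d(z,x)) > 0$. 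Hence there is $N$ such that for every $n \ge N$ the premise of \eqref{implKOL} holds for $(z_n, x)$, and therefore $\psi(H(Tz_n, Tx)) \le r\,T_{\int}(z_n, x)$.

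Since $z_{n+1} \in Tz_n$ yields $d(z_{n+1}, Tx) \le H(Tz_n, Tx)$ and $\psi$ is non-decreasing, for $n \ge N$ we get $\psi(d(z_{n+1}, Tx)) \le \psi(H(Tz_n, Tx)) \le r\,T_{\int}(z_n, x)$. Next I would let $n \to \infty$. The left-hand side tends to $\psi(d(z, Tx))$, because $w \mapsto d(w, Tx)$ is continuous, $z_{n+1} \to z$, and $\psi$ is continuous. For the right-hand side I use that for finitely many sequences the limit superior commutes with the maximum, and that $s \mapsto \int_0^s\psi'(t)\,dt$ is continuous and non-decreasing. Among the four terms of $T_{\int}(z_n, x)$ one has $\int_0^{d(z_n,x)}\psi'(t)\,dt \to \int_0^{d(z,x)}\psi'(t)\,dt$, $\int_0^{d(z_n,Tz_n)}\psi'(t)\,dt \to 0$, while $\int_0^{d(x,Tx)}\psi'(t)\,dt$ is constant.

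The delicate term is the fourth one. Here the key estimate is $d(x, Tz_n) \le d(x, z_{n+1}) \to d(x, z)$ (again because $z_{n+1} \in Tz_n$), together with $d(z_n, Tx) \to d(z, Tx)$, which give $\limsup_n \tfrac{d(z_n, Tx) + d(x, Tz_n)}{2} \le \tfrac{d(z, Tx) + d(z, x)}{2} =: c$. Passing to the limit superior I would obtain
$$
\psi(d(z, Tx)) \le r\max\left\{\int_0^{d(z,x)}\psi'(t)\,dt,\ \int_0^{d(x,Tx)}\psi'(t)\,dt,\ \int_0^{c}\psi'(t)\,dt\right\}.
$$
Finally, since $c \le \max\{d(z,Tx), d(z,x)\}$, monotonicity of the integral gives $\int_0^c\psi'(t)\,dt \le \max\{\int_0^{d(z,Tx)}\psi'(t)\,dt, \int_0^{d(z,x)}\psi'(t)\,dt\}$, so the displayed maximum is dominated by $r\max\{\int_0^{d(z,x)}\psi', \int_0^{d(x,Tx)}\psi', \int_0^{d(z,Tx)}\psi'\}$. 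If this maximum is realized by $\int_0^{d(z,Tx)}\psi'(t)\,dt$, then $\psi(d(z,Tx)) \le r\int_0^{d(z,Tx)}\psi'(t)\,dt \le r\,\psi(d(z,Tx))$, and since $r<1$ this forces $\psi(d(z,Tx))=0$, i.e. $d(z,Tx)=0$, so \eqref{eqL22} holds trivially; otherwise the maximum is one of the first two integrals and \eqref{eqL22} follows at once.

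I expect the main obstacle to be the treatment of the fourth term under the limit superior: one has no control over the sets $Tz_n$ themselves, only over the selected points $z_{n+1} \in Tz_n$, so the bound $d(x, Tz_n) \le d(x, z_{n+1})$ is essential, and the concluding case analysis (which absorbs the stray term $\int_0^{d(z,Tx)}\psi'$ using $r<1$) is what actually closes the argument.
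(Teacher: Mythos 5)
Your proposal is correct and follows essentially the same route as the paper: instantiate the implication \eqref{implKOL} at $(z_n,x)$ for large $n$ (the paper verifies the premise with an explicit $\tfrac{1}{3}d(z,x)$ estimate rather than your limit argument, but both work), bound the fourth term of $T_{\int}$ via $d(x,Tz_n)\le d(x,z_{n+1})$, pass to the limit, and absorb the stray $\int_0^{d(z,Tx)}\psi'$ term using $\int_0^{d(z,Tx)}\psi'\le\psi(d(z,Tx))$ and $r<1$. The differences are only cosmetic.
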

\begin{proof}
Fix an arbitrary $x \in X \setminus \{z\}$. 
Then $d(z,x)>0$, 
and since  $(z_{n})$ converges to $z$, 
there exists $p \in \mathbb{N}$ such that 
$d(z_{n},z) < \frac{d(z,x)}{3}$ 
whenever $n \geq p$, 
and since $z_{n+1} \in Tz_{n}$ it follows that for each $n \geq p$, we have
\begin{equation*}
\begin{split} 
d(z_{n}, Tz_{n}) 
&\leq d(z_{n}, z_{n+1}) \leq d(z_{n}, z) + d(z, z_{n+1})\\
&\leq \frac{2}{3}d(z, x) = d(z, x) - \frac{1}{3}d(z, x)\\
&\leq d(z, x) - d(z_{n}, z) \leq d(z_{n}, x).
\end{split}
\end{equation*}
Hence, 
\begin{equation*}
\begin{split} 
\phi(r)\int_{0}^{d(z_{n}, Tz_{n})} \psi'(t)dt 
&\leq \int_{0}^{d(z_{n}, Tz_{n})} \psi'(t)dt 
\leq \int_{0}^{d(z_{n}, x)} \psi'(t)dt \\
&\leq \psi(d(z_{n}, x)),
\end{split}
\end{equation*}
whenever $n \geq p$. 
By hypothesis the last result implies 
\begin{equation*}
\begin{split} 
\psi( H(Tz_{n}, Tx)) 
\leq r T_{\int}(z_{n}, x) =
r 
\max
\{&
\int_{0}^{d(z_{n}, x)} \psi'(t) dt, \\
&\int_{0}^{d(z_{n}, Tz_{n})} \psi'(t) dt, \int_{0}^{d(x, Tx)} \psi'(t) dt, \\
&\int_{0}^{\frac{d(z_{n}, Tx) + d(x, Tz_{n})}{2}} \psi'(t) dt 
\},
\end{split}
\end{equation*}
whenever $n \geq p$. 
Further, by inequalities 
\begin{equation*}
\begin{split} 
\psi( d(z_{n+1}, Tx)) \leq \psi( H(Tz_{n}, Tx)),  ~ 
\int_{0}^{d(z_{n}, Tz_{n})} \psi'(t) dt \leq \psi(d(z_{n}, Tz_{n})) \leq \psi(d(z_{n}, z_{n+1}))
\end{split}
\end{equation*}
and 
\begin{equation*}
\begin{split} 
\frac{d(z_{n}, Tx) + d(x, Tz_{n})}{2} \leq 
\max \{ d(z_{n}, Tx), d(x, Tz_{n}) \} 
\leq \max \{ d(z_{n}, Tx), d(x, z_{n+1}) \} 
\end{split}
\end{equation*}
it follows that
\begin{equation*}
\begin{split} 
\psi( d(z_{n+1}, Tx)) 
\leq r 
\max
\{&
\int_{0}^{d(z_{n}, x)} \psi'(t) dt, \\ 
&\int_{0}^{d(z_{n}, z_{n+1})} \psi'(t) dt, \int_{0}^{d(x, Tx)} \psi'(t) dt, \\
&\int_{0}^{d(z_{n}, Tx)} \psi'(t) dt, \int_{0}^{d(x, z_{n+1})} \psi'(t) dt 
\}
\end{split}
\end{equation*}
whenever $n \geq p$. 
Hence, by Theorem IX.4.1 in \cite{NAT}, p.252, we get
\begin{equation*}
\begin{split} 
\psi( d(z, Tx)) &= \lim_{n \to \infty}\psi( d(z_{n+1}, Tx)) \\
&\leq r 
\max
\left \{
\int_{0}^{d(z, x)} \psi'(t) dt, \int_{0}^{d(x, Tx)} \psi'(t) dt,
\int_{0}^{d(z, Tx)} \psi'(t) dt
\right \},
\end{split}
\end{equation*}
and since $\int_{0}^{d(z, Tx)} \psi'(t) dt \leq \psi(d(z, Tx))$ we obtain
\begin{equation*}
\begin{split} 
\int_{0}^{d(z, Tx)} \psi'(t) dt \leq r 
\max
\left \{
\int_{0}^{d(z, x)} \psi'(t) dt, \int_{0}^{d(x, Tx)} \psi'(t) dt,
\int_{0}^{d(z, Tx)} \psi'(t) dt
\right \}.
\end{split}
\end{equation*}
The last result implies
\begin{equation*}
\begin{split} 
\max
\left \{
\int_{0}^{d(z, x)} \psi'(t) dt, \int_{0}^{d(x, Tx)} \psi'(t) dt,
\int_{0}^{d(z, Tx)} \psi'(t) dt
\right \}
=
\max
\{
&\int_{0}^{d(z, x)} \psi'(t) dt, \\
&\int_{0}^{d(x, Tx)} \psi'(t) dt
\}
\end{split}
\end{equation*}
and consequently
\begin{equation*}
\begin{split} 
\psi( d(z, Tx)) 
\leq& r 
\max
\left \{
\int_{0}^{d(z, x)} \psi'(t) dt, \int_{0}^{d(x, Tx)} \psi'(t) dt
\right \}.
\end{split}
\end{equation*}
Since $x$ was arbitrary, 
the last result yields that \eqref{eqL22} holds and the proof is finished.
\end{proof}

We are now ready to present the main result.

\begin{theorem}\label{T2.1}
Let $(X,d)$ be a complete metric space and 
let $T:X \to CB(X)$ be a multivalued mapping. 
Assume that $T$ is an $(r, \phi, \psi)$-Suzuki integral contraction. 
Then, $T$ has a fixed point.
\end{theorem}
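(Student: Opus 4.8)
The plan is to build, by iteration, an approximate orbit of $T$, show it is Cauchy, pass to its limit, and finally verify that the limit is fixed. First I would fix $z_0 \in X$ and construct $(z_n)$ with $z_{n+1} \in Tz_n$, choosing each $z_{n+2} \in Tz_{n+1}$ so that $d(z_{n+1}, z_{n+2}) \le d(z_{n+1}, Tz_{n+1}) + \delta_n$ for a summable sequence $(\delta_n)$; if at any stage $d(z_n, Tz_n) = 0$ then $z_n \in Tz_n$ by closedness and we are done, so I may assume all these distances are positive. Writing $c_n = \int_0^{d(z_n, z_{n+1})}\psi'(t)\,dt$, Lemma \ref{L2.1} applied to $x = z_n$ and $y = z_{n+1} \in Tz_n$ gives $\psi(d(z_{n+1}, Tz_{n+1})) \le \psi(H(Tz_n, Tz_{n+1})) \le r\,c_n$. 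Combining this with the selection inequality, the subadditivity and continuity of $\psi$, and $\psi(0)=0$, I would obtain $c_{n+1} \le \psi(d(z_{n+1}, z_{n+2})) \le r\,c_n + \varepsilon_n$ with $\sum_n \varepsilon_n < \infty$ (take $\varepsilon_n = \psi(\delta_n)$).

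From the recursion $c_{n+1} \le r\,c_n + \varepsilon_n$ with $r \in [0,1)$ one deduces $\sum_n c_n < \infty$. I would then convert this integral control into metric control through the hypothesis $\psi(s) \ge s$: since $d(z_{n+1}, z_{n+2}) \le d(z_{n+1}, Tz_{n+1}) + \delta_n \le \psi(d(z_{n+1}, Tz_{n+1})) + \delta_n \le r\,c_n + \delta_n$, the series $\sum_n d(z_n, z_{n+1})$ converges, so $(z_n)$ is Cauchy. By completeness $z_n \to z$ for some $z \in X$.

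It remains to prove $d(z, Tz) = 0$. The idea is to feed the pairs $(z_n, z)$ into the defining implication \eqref{implKOL}. Its premise $\phi(r)\int_0^{d(z_n, Tz_n)}\psi' \le \psi(d(z_n, z))$ need not hold for every $n$, so I would invoke a Suzuki-type dichotomy (the premise cannot fail at two consecutive indices) to secure a subsequence $(z_{n_k})$ along which it does hold. For these indices the implication gives $\psi(H(Tz_{n_k}, Tz)) \le r\,T_{\int}(z_{n_k}, z)$. Letting $k \to \infty$ and using $z_{n_k} \to z$, $d(z_{n_k}, Tz_{n_k}) \to 0$, the continuity of $\psi$ and of $s \mapsto \int_0^s \psi'$, and $d(z_{n_k}, Tz) \to d(z, Tz)$, the right-hand side tends to $r\int_0^{d(z,Tz)}\psi'$. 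Since $d(z_{n_k+1}, Tz) \le H(Tz_{n_k}, Tz)$, the triangle inequality and subadditivity give $\psi(d(z, Tz)) \le \psi(d(z, z_{n_k+1})) + \psi(H(Tz_{n_k}, Tz))$; passing to the limit yields $\psi(d(z,Tz)) \le r\int_0^{d(z,Tz)}\psi' \le r\,\psi(d(z,Tz))$. As $r < 1$ this forces $\psi(d(z,Tz)) = 0$, whence $d(z,Tz) = 0$ because $\psi^{-1}(0) = \{0\}$, and closedness of $Tz$ gives $z \in Tz$.

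The main obstacle is this last step: guaranteeing that the Suzuki premise fires in the limit. This is precisely the purpose of the threshold function $\phi$, and I expect it to force a split into the cases $0 \le r < \tfrac12$ (where $\phi \equiv 1$) and $\tfrac12 \le r < 1$ (where $\phi(r) = 1-r \le \tfrac12$), the latter being where the two-consecutive-indices argument closes cleanly; Lemma \ref{L2.2} is almost certainly the device that disposes of the branch in which the premise fails. A second, quieter subtlety is measure-theoretic: since $\psi$ is only assumed nondecreasing, $\int_0^s \psi'$ may be strictly smaller than $\psi(s)$ and could even vanish for some $s > 0$, so every positivity or lower-bound step must be routed through $\psi(\cdot)$ itself (via $\psi(s)\ge s$ and $\psi^{-1}(0)=\{0\}$) rather than through the integral, as is done above.
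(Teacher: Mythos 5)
Your construction of the orbit and the Cauchy argument are sound and differ only cosmetically from the paper's (the paper picks $\overline r\in(r,1)$ and uses continuity of $\psi$ at $t_n=d(z_n,Tz_n)$ to force $\psi(d(z_n,z_{n+1}))<\overline r\,\psi(d(z_{n-1},z_n))$, a clean geometric decay, where you use a summable-error selection; note only that you must choose $\delta_n$ with $\sum_n\psi(\delta_n)<\infty$, not merely $\sum_n\delta_n<\infty$, since $\psi$ need not be Lipschitz at $0$ --- but this is trivially arranged). Your treatment of the case $\tfrac12\le r<1$ via the two-consecutive-indices dichotomy is also correct and is a legitimate alternative to the paper's route: if the premise failed at both $n$ and $n+1$, then using $\int_0^{d(z_n,Tz_n)}\psi'(t)dt\le\psi(d(z_n,z_{n+1}))$ and Lemma \ref{L2.1} one gets $\psi(d(z_n,z_{n+1}))\le\psi(d(z_n,z))+\psi(d(z,z_{n+1}))<(1-r)(1+r)\psi(d(z_n,z_{n+1}))$, a contradiction since $1-r^2<1$. (The paper instead shows that the premise $\phi(r)\int_0^{d(x,Tx)}\psi'(t)dt\le\psi(d(z,x))$ holds for \emph{every} $x\ne z$, using Lemma \ref{L2.2} and a sub-case analysis, and then sets $x=z_n$; both arguments work.)

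The genuine gap is the case $0\le r<\tfrac12$, where $\phi(r)=1$. There your dichotomy does not close: the same computation only yields $\psi(d(z_n,z_{n+1}))<(1+r)\psi(d(z_n,z_{n+1}))$, which is no contradiction, so you cannot extract a subsequence along which the premise fires for the pairs $(z_{n},z)$, and ``Lemma \ref{L2.2} disposes of it'' is not an argument. The paper's case (i) proceeds by a different mechanism that never feeds the orbit into the implication at all: for an arbitrary $w\in Tz$ (note $w\ne z$ since $z\notin Tz$ is assumed), one writes $d(z,Tz)\le d(z,Tw)+H(Tw,Tz)$, bounds $\psi(H(Tz,Tw))\le r\int_0^{d(z,w)}\psi'(t)dt$ by Lemma \ref{L2.1} and $\psi(d(z,Tw))\le r\max\bigl\{\int_0^{d(z,w)}\psi'(t)dt,\int_0^{d(w,Tw)}\psi'(t)dt\bigr\}$ by Lemma \ref{L2.2}, observes that the second entry of the max is dominated by the first, and obtains $\psi(d(z,Tz))\le 2r\int_0^{d(z,w)}\psi'(t)dt$; taking the infimum over $w\in Tz$ gives $\psi(d(z,Tz))\le 2r\int_0^{d(z,Tz)}\psi'(t)dt\le 2r\,\psi(d(z,Tz))$, which is where the hypothesis $r<\tfrac12$ (i.e.\ $2r<1$) is finally used. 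Without this (or an equivalent) argument your proof covers only half the range of $r$.
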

\begin{proof}
Let us start with an arbitrary point $z_{0} \in X$. 
Choose a real number $\overline{r}$ such that $r < \overline{r} < 1$.

If $d(z_{0}, Tz_{0}) = 0$, then $z_{0}$ is a fixed point and the proof is finished.
Assume that $d(z_{0}, Tz_{0})>0$. 
Then, there exists  $z_{1} \in Tz_{0}$ with 
$d(z_{0}, z_{1}) \geq d(z_{0}, Tz_{0}) >0$. 
Since $z_{1} \in Tz_{0}$, by Lemma \ref{L2.1}, we have
\begin{equation*}
\begin{split} 
\psi( d(z_{1}, T z_{1}) ) \leq \psi( H( Tz_{0}, Tz_{1}) ) 
\leq& r \int_{0}^{d(z_{0}, z_{1})} \psi'(t) dt \\
\leq& r \psi( d(z_{0}, z_{1}) ) < \overline{r} \psi( d(z_{0}, z_{1}) ).
\end{split}
\end{equation*}
We assume that $r >0$ since
$$
r=0 \Rightarrow \psi( d(z_{1}, T z_{1}) ) = 0 
\Rightarrow d(z_{1}, T z_{1})  = 0 
\Rightarrow  z_{1} \in T z_{1}.
$$

If $d(z_{1}, Tz_{1})=0$, then $z_{1}$ is a fixed point and the proof is finished. 
Assume that $d(z_{1}, Tz_{1})=t_{1}>0$. 
Since $\psi$ is continuous at $t_{1}$, 
given 
$$
\varepsilon_{1} = \overline{r} \psi( d(z_{0}, z_{1}) ) -\psi( t_{1} ) >0
$$ 
there exists $\delta_{1}>0$ such that 
\begin{equation*}
t_{1} \leq t < t_{1} + \delta_{1} 
\Rightarrow 
\psi(t_{1}) \leq \psi(t) < \overline{r} \psi( d(z_{0}, z_{1}) ),
\end{equation*}
and since there exists $z_{2} \in Tz_{1}$ such that 
\begin{equation*}
t_{1} \leq d(z_{1}, z_{2}) < t_{1} + \delta_{1} 
\end{equation*}
it follows that 
\begin{equation}\label{eqT21.1} 
\psi(t_{1}) \leq \psi(d(z_{1}, z_{2})) < \overline{r} \psi( d(z_{0}, z_{1}) ).
\end{equation}
We now assume that $z_{n} \in Tz_{n-1}$ has been chosen. 
Then, by Lemma \ref{L2.1}, we have
\begin{equation*}
\begin{split} 
\overline{r} \psi( d(z_{n-1}, z_{n}) )-\psi( d(z_{n}, T z_{n}) )>0.
\end{split}
\end{equation*}
If $d(z_{n}, Tz_{n})=0$, then $z_{n}$ is a fixed point and the proof is finished. 
Assume that $d(z_{n}, Tz_{n})=t_{n}>0$. 
Since $\psi$ is continuous at $t_{n}$, 
given 
$$
\varepsilon_{n} = \overline{r} \psi( d(z_{n-1}, z_{n}) ) -\psi( t_{n} ) >0
$$ 
there exists $\delta_{n}>0$ such that 
\begin{equation*}
t_{n} \leq t < t_{n} + \delta_{n} 
\Rightarrow 
\psi(t_{n}) \leq \psi(t) < \overline{r} \psi( d(z_{n-1}, z_{n}) ),
\end{equation*}
and since there exists $z_{n+1} \in Tz_{n}$ such that 
\begin{equation*}
t_{n} \leq d(z_{n}, z_{n+1}) < t_{n} + \delta_{n} 
\end{equation*}
it follows that 
\begin{equation}\label{eqT21.n}
\psi(t_{n}) \leq \psi(d(z_{n}, z_{n+1})) < \overline{r} \psi( d(z_{n-1}, z_{n}) ).
\end{equation}
Since $z_{n+1} \in Tz_{n}$, by Lemma \ref{L2.1}, we have
\begin{equation*}
\begin{split} 
\overline{r} \psi( d(z_{n}, z_{n+1}) )-\psi( d(z_{n+1}, T z_{n+1}) )>0.
\end{split}
\end{equation*}

By above construction we obtain a sequence $(z_{n}) \subset X$ such that 
\begin{equation}\label{eqT21.C}
\psi(d(z_{n}, z_{n+1})) < \overline{r} \psi( d(z_{n-1}, z_{n}) ),
\text{ for all }n \in \mathbb{N}.
\end{equation}
Hence, we get
\begin{equation*}
\begin{split} 
\psi(d(z_{n}, z_{n+1})) < \overline{r}^{n} \psi( d(z_{0}, z_{1}) ),
\text{ for all }n \in \mathbb{N},
\end{split}
\end{equation*}
and since $\psi(d(z_{n}, z_{n+1})) \geq d(z_{n}, z_{n+1})$ 
it follows that 
\begin{equation*}
\begin{split} 
d(z_{n}, z_{n+1}) < \overline{r}^{n} \psi( d(z_{0}, z_{1}) ),
\text{ for all }n \in \mathbb{N}.
\end{split}
\end{equation*}
Hence 
\begin{equation*}
\begin{split} 
\sum_{n=1}^{+\infty} d(z_{n}, z_{n+1}) 
\leq 
\sum_{n=1}^{+\infty} \overline{r}^{n} \psi( d(z_{0}, z_{1}) ).
\end{split}
\end{equation*} 
Since $\overline{r} < 1$, the last result yields that $(z_{n})$ is a Cauchy sequence in $X$ 
and by completeness of $X$ it follows that  $(z_{n})$ converges to a point 
$z \in X$.

We are going to prove that $z$ is a fixed point of $T$. 
To see this, we suppose that $z \not\in Tz$. 
Consider the following possible cases:
\begin{itemize}
\item[(i)]
$0 \leq r < \frac{1}{2}$,
\item[(ii)]
$\frac{1}{2} \leq r <1$.
\end{itemize}

$(i)$  
Fix an arbitrary $w \in Tz$. 
Since
$$
d(z, Tz) \leq d(z,Tw) + H(Tw, Tz)
$$
we obtain
\begin{equation}\label{eqRasti1.1}
\begin{split} 
\int_{0}^{d(z, Tz)} \psi'(t)dt 
\leq&
\psi(d(z, Tz)) 
\leq 
\psi(d(z,Tw)) + \psi(H(Tw, Tz)),
\end{split}
\end{equation}
and since by Lemma \ref{L2.1} and Lemma \ref{L2.2} we have also
\begin{equation*}
\begin{split} 
\psi(H(Tw, Tz)) 
\leq& 
r \int_{0}^{d(w,z)} \psi'(t)dt \\
\psi(d(z,Tw)) 
\leq& 
r \max 
\left \{
\int_{0}^{d(z,w)} \psi'(t)dt, \int_{0}^{d(w,Tw)} \psi'(t)dt,
\right \} 
\end{split}
\end{equation*} 
and 
\begin{equation*}
\begin{split} 
\int_{0}^{d(w,Tw)}\psi'(t)dt \leq \psi(d(w,Tw)) \leq \psi(H(Tz,Tw)) 
&\leq r \int_{0}^{d(z,w)} \psi'(t)dt \\
&< \int_{0}^{d(z,w)} \psi'(t)dt
\end{split}
\end{equation*} 
it follows that
\begin{equation*}
\begin{split} 
\int_{0}^{d(z, Tz)} \psi'(t)dt 
\leq
\psi(d(z, Tz))  \leq 2r \int_{0}^{d(z,w)} \psi'(t)dt. 
\end{split}
\end{equation*}
Since $w$ was arbitrary, 
the last results yields
\begin{equation}\label{eqRasti1.2}
\begin{split} 
\int_{0}^{d(z, Tz)} \psi'(t)dt \leq
\psi(d(z, Tz)) 
\leq 2r \int_{0}^{d(z,w)} \psi'(t)dt, \text{ for all }w \in Tz. 
\end{split}
\end{equation}
From equality 
$$
d(z,Tz) = \inf \{d(z,w): w \in Tz \}
$$
it follows that 
there exists a sequence $(w_{n}) \subset Tz$ such that
$$
\lim_{n \to \infty} d(z,w_{n}) = d(z, Tz).
$$
Then, by \eqref{eqRasti1.2}, we obtain
\begin{equation*}
\begin{split} 
\int_{0}^{d(z, Tz)} \psi'(t)dt \leq
\psi(d(z, Tz)) 
\leq 2r \int_{0}^{d(z,w_{n})} \psi'(t)dt,  
\text{ for all }n \in \mathbb{N}.
\end{split}
\end{equation*}
Hence, by Theorem IX.4.1 in \cite{NAT}, we get
\begin{equation*}
\begin{split} 
\int_{0}^{d(z, Tz)} \psi'(t)dt 
\leq
\psi(d(z, Tz)) 
\leq 2r \int_{0}^{d(z,Tz)} \psi'(t)dt 
\end{split}
\end{equation*}
and since $0< 2r <1$ it follows that
\begin{equation*}
\begin{split} 
\int_{0}^{d(z, Tz)} \psi'(t)dt = 0 \Rightarrow \psi(d(z, Tz)) = 0 
\Rightarrow d(z,Tz) = 0 
\end{split}
\end{equation*}
or
\begin{equation*}
\begin{split} 
\int_{0}^{d(z, Tz)} \psi'(t)dt > 0 \Rightarrow 
\int_{0}^{d(z, Tz)} \psi'(t)dt < \int_{0}^{d(z, Tz)} \psi'(t)dt.
\end{split}
\end{equation*}
These contradictions show that $z \in Tz$.

$(ii)$
Fix an arbitrary $x \in X \setminus \{z\}$. 
Since
$$
d(x, Tx) \leq d(x,z) + d(z, Tx)
$$
we get
\begin{equation}\label{eqRasti2.1}
\begin{split} 
\int_{0}^{d(x, Tx)} \psi'(t)dt 
\leq&
\int_{0}^{d(x,z) + d(z,Tx)} \psi'(t)dt \\
\leq&
\int_{0}^{d(z,Tx)} \psi'(t)dt + \int_{d(z,Tx)}^{d(x,z) + d(z,Tx)} \psi'(t)dt \\
\leq&
\int_{0}^{d(z,Tx)} \psi'(t)dt + \psi[d(z,x) + d(z,Tx)] - \psi(d(z,Tx)) \\
\leq&
\int_{0}^{d(z,Tx)} \psi'(t)dt + \psi(d(z,x)).
\end{split}
\end{equation}
Since $x \neq z$, by Lemma \ref{L2.2}, we have
\begin{equation}\label{eqRasti2.2}
\begin{split} 
\int_{0}^{d(z,Tx)} \psi'(t)dt 
&\leq \psi( d(z, Tx) ) \\
&\leq r \max 
\left \{
\int_{0}^{d(z,x)} \psi'(t)dt, \int_{0}^{d(x,Tx)} \psi'(t)dt
\right \}.
\end{split}
\end{equation}  
If $\int_{0}^{d(x,Tx)} \psi'(t)dt \leq \int_{0}^{d(z,x)} \psi'(t)dt$, then 
\begin{equation}\label{eqRasti2.3}
\begin{split} 
\phi(r) \int_{0}^{d(x, Tx)} \psi'(t)dt \leq \int_{0}^{d(z,x)} \psi'(t)dt 
\leq \psi(d(z,x))
\end{split}
\end{equation}
Otherwise, if $\int_{0}^{d(x,Tx)} \psi'(t)dt > \int_{0}^{d(z,x)} \psi'(t)dt$, 
then we obtain by \eqref{eqRasti2.2} that
\begin{equation*}
\begin{split} 
\int_{0}^{d(z,Tx)} \psi'(t)dt 
\leq \psi( d(z, Tx) ) 
\leq r \int_{0}^{d(x,Tx)} \psi'(t)dt,
\end{split}
\end{equation*}
and by \eqref{eqRasti2.1} we get
\begin{equation*}
\begin{split} 
\phi(r) \int_{0}^{d(x, Tx)} \psi'(t)dt = (1-r)\int_{0}^{d(x,Tx)} \psi'(t)dt \leq \psi( d(z, x) ).
\end{split}
\end{equation*}
Since $x$ was arbitrary, the last result together with \eqref{eqRasti2.3} yields
\begin{equation*}
\begin{split} 
\phi(r) \int_{0}^{d(x, Tx)} \psi'(t)dt \leq \psi( d(z, x) ), 
\text{ for all }x \in X \setminus \{z\}.
\end{split}
\end{equation*}
Then, by hypothesis it follow that
\begin{equation*}
\begin{split} 
\psi( H(Tx,Tz) ) \leq r \max \{  
&\int_{0}^{d(z,x)} \psi'(t)dt,\\
&\int_{0}^{d(z,Tz)} \psi'(t)dt, \int_{0}^{d(x,Tx)} \psi'(t)dt,\\
&\int_{0}^{\max\{d(z,Tx), d(x,Tz)\}} \psi'(t)dt \},
\end{split}
\end{equation*}
whenever $x \in X \setminus \{z\}$. 
Clearly, if $x=z$, then the last inequality also holds. 
Thus, the last inequality holds for all $x \in X$. 
In particular, for $x =z_{n}$, we have
\begin{equation*}
\begin{split} 
\psi( d(z_{n+1},Tz) ) \leq \psi( H(Tz_{n},Tz) ) \leq r \max \{  
&\int_{0}^{d(z,z_{n})} \psi'(t)dt,\\
&\int_{0}^{d(z,Tz)} \psi'(t)dt, \int_{0}^{d(z_{n}, z_{n+1})} \psi'(t)dt,\\
&\int_{0}^{\max\{d(z, z_{n+1}), d(z_{n},Tz)\}} \psi'(t)dt \},
\end{split}
\end{equation*} 
Hence, by Theorem IX.4.1 in \cite{NAT}, it follows that
\begin{equation*}
\begin{split}  
0< \psi( d(z, Tz) ) 
=& \lim_{n \to \infty} \psi( d(z_{n+1},Tz) ) \\
\leq& 
r \int_{0}^{d(z,Tz)} \psi'(t)dt \leq r \psi( d(z, Tz) )<\psi( d(z, Tz) ).
\end{split}
\end{equation*} 
This contradiction shows that $z \in Tz$ and the proof is finished.
\end{proof}

\begin{corollary}\label{C2.1} 
Let $(X,d)$ be a complete metric space and 
let $T:X \to CB(X)$ be a multivalued mapping. 
If $T$  is an $(r,\psi)$-\'{C}iri\'{c} integral contraction, then $T$ has a  fixed point.
\end{corollary}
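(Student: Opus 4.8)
The plan is to observe that the class of $(r,\psi)$-\'{C}iri\'{c} integral contractions is already contained in the class of $(r,\phi,\psi)$-Suzuki integral contractions, so that the conclusion follows immediately from Theorem~\ref{T2.1}. Concretely, suppose $T$ is an $(r,\psi)$-\'{C}iri\'{c} integral contraction, so that there exist $r\in[0,1)$ and $\psi\in\Psi$ with
\begin{equation*}
\psi(H(Tx,Ty)) \leq r\, T_{\int}(x,y), \text{ for all } x,y \in X.
\end{equation*}
The key point is that here the contractive estimate on the Hausdorff distance holds \emph{unconditionally}, that is, for every pair $x,y$, without any restriction linking $d(x,Tx)$ to $d(x,y)$.

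Next I would verify implication~\eqref{implKOL} directly. Fix $x,y \in X$ and assume the hypothesis
\begin{equation*}
\phi(r)\int_{0}^{d(x,Tx)}\psi'(t)\,dt \leq \psi(d(x,y)).
\end{equation*}
Since the \'{C}iri\'{c} estimate is valid for \emph{all} pairs, it in particular holds for this pair, giving $\psi(H(Tx,Ty)) \leq r\,T_{\int}(x,y)$. Thus the right-hand side of~\eqref{implKOL} is satisfied regardless of whether its left-hand side holds, so the implication is trivially true for the same $r$ and $\psi$, with $\phi$ as in~\eqref{eqDOR}. Hence $T$ is an $(r,\phi,\psi)$-Suzuki integral contraction; this is precisely the implication already noted right after the definition of the \'{C}iri\'{c} integral contraction.

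It then only remains to invoke Theorem~\ref{T2.1}: since $(X,d)$ is complete and $T:X\to CB(X)$ is an $(r,\phi,\psi)$-Suzuki integral contraction, $T$ has a fixed point. There is no genuine obstacle in this argument. The only subtlety worth stating carefully is the logical passage from the unconditional \'{C}iri\'{c} inequality to the conditional Suzuki implication, which is automatic precisely because a statement that always holds, in particular holds under any additional hypothesis one cares to impose.
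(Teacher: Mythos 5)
Your proposal is correct and matches the paper's intended argument exactly: the paper already notes, right after defining the \'{C}iri\'{c} integral contraction, that every $(r,\psi)$-\'{C}iri\'{c} integral contraction is an $(r,\phi,\psi)$-Suzuki integral contraction, and the corollary is then an immediate consequence of Theorem~\ref{T2.1}. Your careful justification of the vacuous passage from the unconditional inequality to the conditional implication is the only content needed, and it is right.
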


\begin{corollary}\label{C2.2}
Let $(X,d)$ be a complete metric space and 
let $S:X \to X$ be a mapping. 
If $S$ is an $(r, \phi, \psi)$-Suzuki integral contraction, 
then $S$ has a unique fixed point.
\end{corollary}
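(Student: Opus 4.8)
The plan is to extract both existence and uniqueness from Theorem \ref{T2.1} together with the defining implication \eqref{implKOL}. By definition, saying that $S$ is an $(r,\phi,\psi)$-Suzuki integral contraction means exactly that the multivalued map $T$ given by $Tx = \{Sx\}$ is one; hence Theorem \ref{T2.1} immediately furnishes a point $z \in X$ with $z \in Tz = \{Sz\}$, that is $z = Sz$. Existence thus costs nothing, and the only genuine work lies in uniqueness.

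For uniqueness I would suppose $z$ and $w$ are both fixed points of $S$ and force $d(z,w)=0$. The key observation is that at a fixed point $d(z,Tz) = d(z,z) = 0$, so the antecedent of \eqref{implKOL} reads $\phi(r)\cdot 0 \leq \psi(d(z,w))$, which holds trivially; the implication therefore yields $\psi(H(Tz,Tw)) \leq r\,T_{\int}(z,w)$ with no side condition to check. Using the singleton structure, $H(Tz,Tw) = d(z,w)$, while the four quantities entering $T_{\int}(z,w)$ reduce to $\int_0^{d(z,w)}\psi'(t)\,dt$, $0$, $0$, and (since $d(z,Tw) = d(w,Tz) = d(z,w)$) again $\int_0^{d(z,w)}\psi'(t)\,dt$, so that $T_{\int}(z,w) = \int_0^{d(z,w)}\psi'(t)\,dt$. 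Combined with the bound $\int_0^{d(z,w)}\psi'(t)\,dt \leq \psi(d(z,w))$ from Theorem VIII.2.5 in \cite{NAT} (recall $\psi(0)=0$), this gives $\psi(d(z,w)) \leq r\,\psi(d(z,w))$.

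Since $r \in [0,1)$, the last inequality is impossible unless $\psi(d(z,w)) = 0$, and $\psi^{-1}(0) = \{0\}$ then forces $d(z,w)=0$, i.e.\ $z=w$. I expect no real obstacle here: the point is precisely that the Suzuki-type side condition evaporates at a fixed point, collapsing the problem to a one-line contraction estimate. The only step warranting care is the reduction of $T_{\int}(z,w)$, where one must confirm that the fourth term equals the first rather than producing something larger.
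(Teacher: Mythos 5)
Your proof is correct, and the existence half coincides with the paper's (pass to $Tx=\{Sx\}$ and invoke Theorem \ref{T2.1}). For uniqueness you diverge slightly: the paper applies Lemma \ref{L2.2} with $x=z'$ (implicitly using the constant sequence $z_n=z$, which satisfies $z_{n+1}\in Tz_n$ since $z\in Tz$) to get $\psi(d(z,Tz'))\le r\max\{\int_0^{d(z,z')}\psi'(t)\,dt,\int_0^{d(z',Tz')}\psi'(t)\,dt\}$ and then kills the second term, whereas you bypass Lemma \ref{L2.2} entirely and feed the two fixed points directly into the defining implication \eqref{implKOL}, noting that the antecedent is vacuous because $d(z,Tz)=0$. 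Your reduction of $T_{\int}(z,w)$ to $\int_0^{d(z,w)}\psi'(t)\,dt$ is right (the fourth term is exactly the first, since $d(z,Tw)=d(w,Tz)=d(z,w)$), and the closing step $\psi(d(z,w))\le r\,\psi(d(z,w))$ with $r<1$ and $\psi^{-1}(0)=\{0\}$ is sound. Your route is arguably cleaner: it is self-contained at the level of the definition, avoids the sequence hypothesis of Lemma \ref{L2.2}, and makes transparent why the Suzuki side condition is irrelevant at a fixed point; the paper's route buys only a small textual saving by reusing a lemma already proved. Both arguments terminate in essentially the same one-line contraction estimate.
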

\begin{proof}
Since the multivalued mapping $T : X \to CB(X)$ defined as follows
$$
Tx = \{ Sx \}, \text{ for all } x \in X,
$$
is an $(r, \phi, \psi)$-Suzuki integral contraction,  
by Theorem \ref{T2.1}, $T$ has a fixed point $z$
and, consequently, $z$ is a fixed point for $S$.

It remains to prove that the fixed point $z$ is unique. 
Assume that $z' \in X$ is another fixed point for $S$ 
and $z \neq z'$. 
Then, by Lemma \ref{L2.2}, we have
$$
\int_{0}^{d(z,Tz')} \psi'(t) dt \leq \psi(d(z,Tz')) \leq  
r \max
\left \{
\int_{0}^{d(z,z')} \psi'(t) dt, \int_{0}^{d(z',Tz')} \psi'(t) dt,
\right \}
$$  
and since  $\int_{0}^{d(z',Tz')} \psi'(t) dt = 0$ 
and $d(z,Tz') = d(z,z')$ 
it follows that
$$
0 \leq \int_{0}^{d(z, z')} \psi'(t) dt \leq \psi(d(z, z')) \leq  
r \int_{0}^{d(z,z')} \psi'(t) dt 
$$
Hence,
\begin{equation*}
\begin{split} 
\left ( r = 0 \text{ or }\int_{0}^{d(z,z')} \psi'(t) dt = 0 \right ) 
\Rightarrow \psi(d(z, z')) = 0 \Rightarrow d(z, z') = 0,
\end{split}
\end{equation*} 
or
\begin{equation*}
\begin{split} 
\left ( 0 < r <1 \text{ and }\int_{0}^{d(z,z')} \psi'(t) dt > 0 \right )  \Rightarrow \int_{0}^{d(z, z')} \psi'(t) dt < 
\int_{0}^{d(z,z')} \psi'(t) dt. 
\end{split}
\end{equation*}
These contradicts show that $z =z'$ and this ends the proof.
\end{proof}

\begin{corollary}\label{C2.22}
Let $(X,d)$ be a complete metric space and 
let $S:X \to X$ be a mapping. 
If $S$ is an $(r, \psi)$-\'{C}iri\'{c} integral contraction, then $S$ has a unique fixed point.
\end{corollary}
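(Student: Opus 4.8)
The plan is to obtain this corollary as an immediate specialization of Corollary \ref{C2.2}, using the implication between the two contraction notions that was already recorded among the definitions. First I would unwind what the hypothesis says: by definition, $S$ being an $(r,\psi)$-\'{C}iri\'{c} integral contraction means precisely that the associated multivalued map $Tx=\{Sx\}$ is an $(r,\psi)$-\'{C}iri\'{c} integral contraction, i.e.
$$\psi(H(Tx,Ty)) \le r\, T_{\int}(x,y) \qquad \text{for all } x,y \in X.$$

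Next I would invoke the observation made just after the definitions, namely that any $(r,\psi)$-\'{C}iri\'{c} integral contraction is automatically an $(r,\phi,\psi)$-Suzuki integral contraction. The point is that the \'{C}iri\'{c} inequality holds \emph{unconditionally} for every pair $(x,y)$, so in particular it holds whenever the premise $\phi(r)\int_{0}^{d(x,Tx)}\psi'(t)\,dt \le \psi(d(x,y))$ of the Suzuki implication \eqref{implKOL} is satisfied; thus the implication \eqref{implKOL} is vacuously fulfilled. Hence $T$ is an $(r,\phi,\psi)$-Suzuki integral contraction, which is exactly the statement that $S$ is an $(r,\phi,\psi)$-Suzuki integral contraction.

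Finally I would apply Corollary \ref{C2.2} to the single-valued map $S$ to conclude that $S$ has a unique fixed point. I do not expect any genuine obstacle here: the entire content is the logical reduction to Corollary \ref{C2.2}. The only thing worth stating carefully is that the unconditional \'{C}iri\'{c} bound trivially verifies the conditional Suzuki implication for every pair $(x,y)$, irrespective of which of the two branches of $\phi$ in \eqref{eqDOR} is active, so that no case distinction on the value of $r$ is needed at this level.
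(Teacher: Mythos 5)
Your proposal is correct and follows exactly the route the paper intends: the paper leaves Corollary \ref{C2.22} without a written proof precisely because it is the immediate combination of the remark that every $(r,\psi)$-\'{C}iri\'{c} integral contraction is an $(r,\phi,\psi)$-Suzuki integral contraction (the unconditional inequality trivially discharges the conditional implication \eqref{implKOL}) with Corollary \ref{C2.2}. Nothing is missing.
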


\begin{corollary}\label{C2.3}
Let $(X,d)$ be a complete metric space and 
let $T:X \to CB(X)$ be a multivalued mapping. 
If $T$  is an $(r,\phi)$-Suzuki multivalued contraction, then $T$ has a  fixed point.
\end{corollary}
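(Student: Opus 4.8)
The plan is to exhibit an $(r,\phi)$-Suzuki multivalued contraction as a particular $(r,\phi,\psi)$-Suzuki integral contraction and then apply Theorem~\ref{T2.1} directly. The guiding observation is the remark already recorded in the preliminaries: when $\psi(t)=t$, the weighted and integral conditions collapse to the classical Suzuki-type condition \eqref{implDOR} of Theorem~\ref{DOR}.

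First I would check that the identity weight $\psi(t)=t$ lies in $\Psi$. It is continuous and non-decreasing on $\mathbb{R}^{+}$; it is subadditive because $\psi(t'+t'')=t'+t''=\psi(t')+\psi(t'')$; it satisfies $\psi(t)\ge t$ for all $t>0$ (with equality); and $\psi^{-1}(0)=\{0\}$. Hence $\psi(t)=t$ is an admissible weight function, so Theorem~\ref{T2.1} is applicable to it.

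Next I would compute that for this choice $\psi'(t)=1$ everywhere, so $\int_{0}^{s}\psi'(t)\,dt=s$ for every $s\ge 0$. Substituting this into the defining implication \eqref{implKOL} gives $\int_{0}^{d(x,Tx)}\psi'(t)\,dt=d(x,Tx)$, $\psi(d(x,y))=d(x,y)$, $\psi(H(Tx,Ty))=H(Tx,Ty)$ and $T_{\int}(x,y)=T_{M}(x,y)$, so that \eqref{implKOL} becomes, word for word, the Suzuki implication \eqref{implDOR}. Thus every $(r,\phi)$-Suzuki multivalued contraction is precisely an $(r,\phi,\psi)$-Suzuki integral contraction with $\psi(t)=t$, and Theorem~\ref{T2.1} then produces a point $z\in X$ with $z\in Tz$.

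The main (indeed the only) point requiring attention is the verification that $\psi(t)=t\in\Psi$, in particular its subadditivity together with the elementary derivative computation $\int_{0}^{s}\psi'(t)\,dt=s$; once these are in place the corollary is an immediate specialization of the main theorem, so no genuine obstacle arises.
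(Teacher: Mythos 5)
Your proposal is correct and follows exactly the paper's own argument: the paper likewise takes $\psi(t)=t$, notes that $T$ then becomes an $(r,\phi,\psi)$-Suzuki integral contraction, and invokes Theorem~\ref{T2.1}. Your version merely spells out the routine verification that $\psi(t)=t$ belongs to $\Psi$ and that the integrals reduce to $T_{M}(x,y)$, which the paper leaves implicit.
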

\begin{proof}
Clearly, $T$ is an $(r, \phi, \psi)$-Suzuki integral contraction 
with $\psi(t)=t$ for all $t \in \mathbb{R}^{+}$. 
Then, by Theorem \ref{T2.1}, $T$ has a fixed point and the proof is finished.
\end{proof}

\section{An Application to Dynamic Programming}

We will now prove an existence and uniqueness theorem for a functional equation arising in dynamic programing of continuous multistage decision processes. 
From now on $X$ and $Y$ will be the Banach spaces. 
Let $S \subset X$ be the \textit{state space} and 
let $D \subset Y$ be the \textit{decision space}. 
In the paper \cite{BELL}, Bellman and Lee gave the following basic form 
of the functional equation of dynamic programming:
$$
f(x) = \sup_{y \in D} H(x,y,f[T(x,y)]),
$$
where $x$ and $y$ represent the state and decision vectors respectively, 
$T: S \times D \to S$ represents the transformation of the process 
and $f(x)$ represent the optimal return function with initial state $x$. 

We will study the existence and the uniqueness of the solution 
of the following functional equation:
\begin{equation}\label{eqKryesor}
f(x) = \sup_{y \in D} [~ g(x, y) + G(x, y, f[T(x,y)])~], \text{ for all }x \in S, 
\end{equation}   
where $g: S\times D \to \mathbb{R}$ and 
$g: S\times D \times \mathbb{R} \to \mathbb{R}$ are bounded functions.

Let $B(S)$ be the Banach space of real valued bounded functions defined on $S$ with norm
$$
||h|| = \sup\{|h(x)|: x \in S\}, \text{ for all } h \in B(S). 
$$ 
Since $g$ and $G$ are bounded functions we can define a mapping 
$A: B(S) \to B(S)$ as follows
\begin{equation}\label{mappKryesor}
(Ah)(x) = \sup_{y \in D} [~ g(x, y) + G(x, y, h[T(x,y)])~], 
\end{equation}
for each $h \in B(S)$ and $x \in S$. 
It is easy to see that any fixed point of $A$ is a solution 
of the functional equation \eqref{eqKryesor} 
and, conversely, 
any bounded solution of  \eqref{eqKryesor} 
is a fixed point of $A$.

Clearly, if $A$ is an $(r, \phi, \psi)$- Suzuki integral contraction, 
then there exists $r \in [0,1)$ and $\psi \in \Psi$ such that the implication
$$
\phi(r) \int_{0}^{||h-Ah||} \psi'(t)dt \leq \psi(||h-\ell||) 
\Rightarrow 
\psi(||Ah-A\ell||) \leq r A_{\int}(h,\ell),
$$
holds whenever $h, \ell \in B(S)$, 
where $\phi$ is defined by \eqref{eqDOR} 
and
\begin{equation*}
\begin{split}                                          
A_{\int}(h,\ell) = \max 
\{
&\int_{0}^{||h-\ell||} \psi'(t)dt, \int_{0}^{||h-Ah||} \psi'(t)dt, 
\int_{0}^{||\ell-A\ell||} \psi'(t)dt, \\
&\int_{0}^{\frac{||h-A\ell||) + ||\ell-Ah||}{2}} \psi'(t)dt           
\}.
\end{split}
\end{equation*}

Since $\Psi$ is a subfamily of $\mathfrak{F}$ defined in \cite{KAL}, 
we obtain by Lemmas 3.1 and 3.2 in \cite{KAL} 
the following auxiliary lemmas.

\begin{lemma}\label{L3.1}
Let $R$ be a subset of $\mathbb{R}^{+}$ such that $\sup R < +\infty$, 
and let $\psi \in \Psi$. 
Then,
$$
\psi(\sup R) = \sup \psi(R).
$$
\end{lemma}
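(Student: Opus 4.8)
The plan is to establish $\psi(\sup R) = \sup \psi(R)$ by proving both inequalities separately, exploiting the monotonicity and continuity of $\psi \in \Psi$. First I would dispose of the trivial case $R = \emptyset$ (or handle $\sup R = 0$) and otherwise set $s = \sup R < +\infty$. The inequality $\sup \psi(R) \leq \psi(s)$ is immediate from monotonicity: for every $t \in R$ we have $t \leq s$, and since $\psi$ is non-decreasing, $\psi(t) \leq \psi(s)$; taking the supremum over $t \in R$ gives $\sup \psi(R) \leq \psi(s)$. This direction uses only that $\psi$ is non-decreasing and requires no continuity.

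For the reverse inequality $\psi(s) \leq \sup \psi(R)$, I would use the continuity of $\psi$ together with the approximation property of the supremum. By definition of $s = \sup R$, there exists a sequence $(t_n) \subset R$ with $t_n \to s$. Since $\psi$ is continuous on $\mathbb{R}^{+}$, we have $\psi(t_n) \to \psi(s)$. But each $\psi(t_n) \in \psi(R)$, so $\psi(t_n) \leq \sup \psi(R)$ for every $n$; passing to the limit yields $\psi(s) \leq \sup \psi(R)$. Combining the two inequalities gives the claimed equality.

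I expect the main (though modest) obstacle to be ensuring the supremum-approximating sequence actually lies in $R$: if $s$ is attained, one may take the constant sequence, while if $s$ is a limit point not in $R$, one selects $t_n \in R$ with $s - \tfrac{1}{n} < t_n \leq s$, which is exactly where the continuity of $\psi$ is essential. Since the excerpt explicitly notes that $\Psi$ is a subfamily of $\mathfrak{F}$ from \cite{KAL} and attributes this lemma to Lemmas 3.1 and 3.2 there, the cleanest presentation would simply invoke that reference; but the self-contained two-inequality argument above, relying only on the stated properties (non-decreasing and continuous on $\mathbb{R}^{+}$), is short enough to record directly and makes the proof independent of the external citation.
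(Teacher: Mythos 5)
Your argument is correct. Both inequalities are handled properly: $\sup\psi(R)\le\psi(\sup R)$ from monotonicity alone, and the reverse inequality from continuity of $\psi$ at $s=\sup R$ applied to a sequence $(t_n)\subset R$ with $t_n\to s$. The paper, however, does not prove this lemma at all: it simply observes that $\Psi$ is a subfamily of the class $\mathfrak{F}$ of \cite{KAL} and imports the statement from Lemma 3.1 there, so your self-contained two-inequality argument is a genuinely different (and more transparent) route. What the citation buys the paper is brevity; what your version buys is independence from the external reference and the explicit observation that only the non-decreasing and continuity properties of $\psi\in\Psi$ are used, not subadditivity or the conditions $\psi(t)\ge t$ and $\psi^{-1}(0)=\{0\}$. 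The only point worth tightening is the degenerate case: since $R\subset\mathbb{R}^{+}$, you should either assume $R\neq\emptyset$ (which is implicit in the statement, as $\sup\emptyset$ is not a well-defined element of $\mathbb{R}^{+}$) or note that the lemma is vacuous otherwise; once $R\neq\emptyset$, your selection $t_n\in R$ with $s-\tfrac1n<t_n\le s$ is exactly the standard approximation property of the supremum and needs no further justification.
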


\begin{lemma}\label{L3.2}
Assume that the functions $g$ and $G$ are bounded and 
let $\varepsilon >0$, $x \in S$, $h, \ell \in B(S)$ 
and let $\psi \in \Psi$. 
Then, there exists $y_{1}, y_{2} \in D$ such that
\begin{equation}\label{eqL32}
\psi( |(Ah)(x)-(A\ell)(x)| ) 
\leq
\max \{ \psi(|a(x,y_{1})|), \psi(|b(x,y_{2})|) \} + \varepsilon,  
\end{equation}
where
\begin{equation*}
\begin{split} 
a(x,y_{1}) &= G(x, y_{1}, h[T(x,y_{1})]) - G(x, y_{1}, \ell[T(x,y_{1})]),\\
b(x,y_{2}) &= G(x, y_{2}, h[T(x,y_{2})]) - G(x, y_{2}, \ell[T(x,y_{2})]).
\end{split}
\end{equation*} 
\end{lemma}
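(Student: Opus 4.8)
The plan is to reduce the whole statement to the elementary behaviour of a supremum under a small perturbation, and then push the resulting estimate through $\psi$ using its subadditivity and monotonicity. For the fixed $x \in S$ and each $y \in D$, I would abbreviate
\[
P(y) = g(x,y) + G(x,y,h[T(x,y)]), \qquad Q(y) = g(x,y) + G(x,y,\ell[T(x,y)]),
\]
so that $(Ah)(x) = \sup_{y \in D} P(y)$ and $(A\ell)(x) = \sup_{y \in D} Q(y)$. The key algebraic observation is that $P(y) - Q(y) = G(x,y,h[T(x,y)]) - G(x,y,\ell[T(x,y)])$, because the term $g(x,y)$ does not involve $h$ or $\ell$ and cancels; this difference is exactly the quantity that appears as $a(x,y)$ and $b(x,y)$.

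First I would fix the tolerance at the level of $\psi$, \emph{before} selecting any decisions. Since $\psi \in \Psi$ satisfies $\psi^{-1}(0) = \{0\}$ and is continuous, we have $\psi(0) = 0$ and $\psi$ is continuous at $0$; hence there exists $\delta > 0$ with $\psi(\delta) < \varepsilon$, and I choose and fix such a $\delta$. Next, using the definition of the supremum, I would select $y_{1} \in D$ with $(Ah)(x) < P(y_{1}) + \delta$ and $y_{2} \in D$ with $(A\ell)(x) < Q(y_{2}) + \delta$. Combining these with the trivial bounds $(A\ell)(x) \geq Q(y_{1})$ and $(Ah)(x) \geq P(y_{2})$ yields the two one-sided estimates
\[
(Ah)(x) - (A\ell)(x) < a(x,y_{1}) + \delta \leq |a(x,y_{1})| + \delta,
\]
\[
(A\ell)(x) - (Ah)(x) < -b(x,y_{2}) + \delta \leq |b(x,y_{2})| + \delta,
\]
and therefore
\[
|(Ah)(x) - (A\ell)(x)| \leq \max\{|a(x,y_{1})|, |b(x,y_{2})|\} + \delta.
\]

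Finally I would apply $\psi$ to this last inequality. Monotonicity of $\psi$ bounds the left-hand side by $\psi(\max\{|a(x,y_{1})|,|b(x,y_{2})|\} + \delta)$; subadditivity splits this into $\psi(\max\{|a(x,y_{1})|,|b(x,y_{2})|\}) + \psi(\delta)$; and the non-decreasing property gives $\psi(\max\{u,v\}) = \max\{\psi(u),\psi(v)\}$ for a two-element maximum. Together with $\psi(\delta) < \varepsilon$ this is precisely \eqref{eqL32}. The only point requiring care—the real ``obstacle''—is the order of the quantifiers: the perturbation $\delta$ must be extracted from $\varepsilon$ via continuity of $\psi$ at $0$ \emph{before} the decisions $y_{1}, y_{2}$ are produced, and it is the subadditivity of $\psi$ that decouples this error term from the a priori unknown values $|a(x,y_{1})|$ and $|b(x,y_{2})|$, so that a single choice of $\delta$ works uniformly regardless of which $y_{1}, y_{2}$ the supremum approximation returns.
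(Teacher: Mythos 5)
Your proof is correct and complete. Note that the paper itself does not prove this lemma at all: it simply observes that $\Psi$ is contained in the class $\mathfrak{F}$ of the author's earlier paper \cite{KAL} and imports the statement from Lemma 3.2 there, so your argument supplies a self-contained derivation that the present paper omits. The argument you give is the standard one and all the delicate points are handled properly: the sups are finite because $g$ and $G$ are bounded, the tolerance $\delta$ with $\psi(\delta)<\varepsilon$ is extracted from the continuity of $\psi$ at $0$ (using $\psi(0)=0$, which follows from $\psi^{-1}(0)=\{0\}$) \emph{before} the near-maximizers $y_{1},y_{2}$ are chosen, the cancellation of $g(x,y)$ in $P(y)-Q(y)$ is exactly what produces $a$ and $b$, and the final passage through $\psi$ uses only monotonicity and subadditivity. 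Nothing is missing.
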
  

We are now ready to prove the existence and uniqueness of the solution of the functional equation 
\eqref{eqKryesor} under certain conditions.

\begin{theorem}\label{T3.1}
Assume that the following conditions are satisfied:
\begin{itemize}
\item[(i)]
$g$ and $G$ are bounded functions,
\item[(ii)]
there exist $r \in [0,1)$ and $\psi \in \Psi$ such that 
for each $h, \ell \in B(S)$, the inequality
$$
\phi(r) \int_{0}^{||h - Ah||} \psi'(t)dt \leq \psi(||h - \ell||)
$$
implies that for each $(x,y) \in S \times D$, we have
$$
\psi(~ |G(x,y,h[T(x,y)]) - G(x,y,\ell[T(x,y)])| ~) \leq 
r A_{\int}(h,\ell). 
$$
\end{itemize}
Then, the functional equation \eqref{eqKryesor} 
possesses 
unique bounded solution on $S$. 
\end{theorem}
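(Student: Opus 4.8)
The plan is to realize the bounded solutions of \eqref{eqKryesor} as the fixed points of the operator $A$ defined in \eqref{mappKryesor}, and then to place $A$ under the scope of Corollary \ref{C2.2}. As already observed before the statement, a function $h \in B(S)$ solves \eqref{eqKryesor} if and only if $h = Ah$, so it suffices to prove that $A$ has a unique fixed point in $B(S)$. Since $(B(S), ||\cdot||)$ is a complete metric space, by Corollary \ref{C2.2} it is enough to show that $A$ is an $(r, \phi, \psi)$-Suzuki integral contraction, i.e. that the implication
$$
\phi(r)\int_{0}^{||h - Ah||}\psi'(t)\,dt \leq \psi(||h - \ell||) \ \Rightarrow\ \psi(||Ah - A\ell||) \leq r\,A_{\int}(h,\ell)
$$
holds for all $h, \ell \in B(S)$.

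To verify this implication, I would fix $h, \ell \in B(S)$ and assume the antecedent. Then condition $(ii)$ immediately gives, for every $(x,y) \in S \times D$,
$$
\psi(~|G(x,y,h[T(x,y)]) - G(x,y,\ell[T(x,y)])|~) \leq r\,A_{\int}(h,\ell),
$$
the right-hand side being a single constant independent of $(x,y)$. Next I would bring in Lemma \ref{L3.2}: for an arbitrary $\varepsilon > 0$ and an arbitrary $x \in S$ it produces decisions $y_{1}, y_{2} \in D$ (depending on $x$ and $\varepsilon$) with
$$
\psi(|(Ah)(x) - (A\ell)(x)|) \leq \max\{\psi(|a(x,y_{1})|),\, \psi(|b(x,y_{2})|)\} + \varepsilon.
$$
Because the bound $r\,A_{\int}(h,\ell)$ obtained above is uniform in the decision variable, it applies in particular to $y_{1}$ and $y_{2}$, so both $\psi(|a(x,y_{1})|)$ and $\psi(|b(x,y_{2})|)$ are at most $r\,A_{\int}(h,\ell)$. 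Hence $\psi(|(Ah)(x) - (A\ell)(x)|) \leq r\,A_{\int}(h,\ell) + \varepsilon$, and letting $\varepsilon \to 0$ I would obtain the pointwise estimate $\psi(|(Ah)(x) - (A\ell)(x)|) \leq r\,A_{\int}(h,\ell)$ for all $x \in S$.

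The final step is to upgrade this pointwise bound to the required norm bound. Setting $R = \{\, |(Ah)(x) - (A\ell)(x)| : x \in S \,\}$, which has finite supremum $||Ah - A\ell||$ since $Ah, A\ell \in B(S)$, I would invoke Lemma \ref{L3.1} to commute $\psi$ with the supremum:
$$
\psi(||Ah - A\ell||) = \psi(\sup R) = \sup \psi(R) \leq r\,A_{\int}(h,\ell).
$$
This is exactly the consequent of the implication, so $A$ is an $(r, \phi, \psi)$-Suzuki integral contraction, and Corollary \ref{C2.2} yields a unique fixed point of $A$, equivalently a unique bounded solution of \eqref{eqKryesor} on $S$.

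The only delicate point in the whole argument is this last interchange of $\psi$ with the supremum, combined with the uniformity of the contraction bound over the decision space $D$; everything else is a direct chaining of the two auxiliary lemmas with hypothesis $(ii)$. It is precisely the monotonicity and continuity of $\psi$ built into the class $\Psi$ that make Lemma \ref{L3.1}, and hence this concluding step, legitimate.
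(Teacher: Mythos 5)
Your proposal is correct and follows essentially the same route as the paper: combine hypothesis (ii) with Lemma \ref{L3.2} to get the pointwise bound up to $\varepsilon$, pass to the supremum via Lemma \ref{L3.1}, and conclude with Corollary \ref{C2.2}. The only (immaterial) difference is that you let $\varepsilon \to 0$ pointwise before taking the supremum, whereas the paper takes the supremum first and then removes $\varepsilon$.
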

\begin{proof}
Assume that an arbitrary $\varepsilon>0$ and $h, \ell \in B(S)$ such that
\begin{equation}\label{eqT31.1}
\phi(r) \int_{0}^{||h-Ah||} \psi'(t) dt \leq \psi(||h-\ell||)
\end{equation}
are given. 
Fix an arbitrary $x \in S$. 
Then, by Lemma \ref{L3.2}, there exists $y_{1}, y_{2} \in D$ such that
\begin{equation*}
\psi( |(Ah)(x)-(A\ell)(x)| ) 
\leq
\max \{ \psi(|a(x,y_{1})|), \psi(|b(x,y_{2})|) \} + \varepsilon.  
\end{equation*}
By hypothesis, we also have
\begin{equation*}
\begin{split} 
\psi(|a(x,y_{1})|) \leq r A_{\int}(h,\ell)
\quad\text{and}\quad
\psi(|b(x,y_{2})|) \leq r A_{\int}(h,\ell).
\end{split}
\end{equation*}
It follows that 
\begin{equation*}
\psi( |(Ah)(x)-(A\ell)(x)| ) 
\leq
r A_{\int}(h,\ell) + \varepsilon.  
\end{equation*}
Since $x$ was arbitrary, the last inequality holds for all $x \in S$. 
Hence, 
\begin{equation*}
\sup_{x \in S} \psi( |(Ah)(x)-(A\ell)(x)| ) 
\leq
r A_{\int}(h,\ell) + \varepsilon,  
\end{equation*}
and since by Lemma \ref{L3.1} we also have
\begin{equation*}
\sup_{x \in S} \psi( |(Ah)(x)-(A\ell)(x)| ) =
\psi( \sup_{x \in S} |(Ah)(x)-(A\ell)(x)| ) = 
\psi( ||Ah-A\ell|| ),
\end{equation*}
it follows that
\begin{equation*}
\psi( ||Ah-A\ell|| )
\leq
r A_{\int}(h,\ell) + \varepsilon. 
\end{equation*}
Since $\varepsilon >0$ was arbitrary, 
the last result implies
\begin{equation}\label{eqT31.2}
\psi( ||Ah-A\ell|| )
\leq
r A_{\int}(h,\ell). 
\end{equation}
Thus, inequality \eqref{eqT31.1} implies  \eqref{eqT31.2}, 
whenever $h, \ell \in B(S)$. 
This means that $A$ is an $(r, \phi, \psi)$-Suzuki integral contraction. 
Therefore, by Corollary \ref{C2.2}, 
$A$ has a unique fixed point and, therefore, 
the functional equation \eqref{eqKryesor} 
possesses unique bounded solution on $S$, 
and this ends the proof.
\end{proof}

\bibliographystyle{plain}

\end{document}